\newtheorem{theorem}{Theorem}[section]
\newtheorem{prop}[theorem]{Proposition}
\newtheorem{lem}[theorem]{Lemma}
\newtheorem{coro}[theorem]{Corollary}
\newtheorem{thm}[theorem]{Theorem}
\newtheorem{exam}[theorem]{Example}
\DeclareMathOperator{\GL}{GL}
\DeclareMathOperator{\diag}{diag}
\newcommand{\N}{\mathbb{N}} 
\newcommand{\F}{\mathbb{F}} 
\newcommand{\ra}{\longrightarrow} 
\newcommand{\A}{\mathcal{A}} 
\newcommand{\B}{\mathcal{B}} 
\newcommand{\OO}{\mathcal{O}} 
\newcommand{\NN}{\mathcal{N}} 
\newcommand{\M}{\mathcal{M}} 
\newcommand{\I}{\mathcal{I}} 
\newcommand{\V}{\mathcal{V}} 
\newcommand{\wt}{\widetilde}
\newcommand{\hbo}{$\hfill\Diamond$} 
\begin{document}
\title{A class of quadratic matrix equations over finite fields} 
\def\shorttitle{A class of quadratic matrix equations over finite fields}

\author{Yin Chen}
\address{School of Mathematics and Statistics, Northeast Normal University, Changchun, China \& Department of Mathematics and Statistics, Queen's University, Kingston, K7L 3N6, Canada}
\email{ychen@nenu.edu.cn}

\author{Xinxin Zhang}
\address{School of Mathematics and Statistics, Northeast Normal University, Changchun, China}
\email{zhangxx272@nenu.edu.cn}

\begin{abstract}
We exhibit an explicit formula for the cardinality of solutions to a class of quadratic matrix equations over finite fields. 
We prove that the orbits of these solutions under the natural conjugation action of the general linear groups can be separated by classical conjugation invariants defined by characteristic polynomials. We also find a generating set for the vanishing ideal of these orbits.
\end{abstract}

\date{\today}
\subjclass[2010]{15A24; 15A35; 13A50.}
\keywords{Matrix equations; general linear groups; finite fields; separating invariants.}
\maketitle \baselineskip=15pt

\dottedcontents{section}[1.16cm]{}{1.8em}{5pt}
\dottedcontents{subsection}[2.00cm]{}{2.7em}{5pt}

\section{Introduction}
\setcounter{equation}{0}
\renewcommand{\theequation}
{1.\arabic{equation}}
\setcounter{theorem}{0}
\renewcommand{\thetheorem}
{1.\arabic{theorem}}

\noindent Yang-Baxter matrix equations  occupy a prominent place in pure mathematics and mathematical physics. Exploiting nontrivial solutions to a Yang-Baxter matrix equation over the complex field is a difficult task in general, whereas describing those solutions to some specific equations precisely is indispensable in applications to algebraic geometry and statistical mechanics. Compared to solving matrix equations over fields of characteristic zero, exploring solutions to a matrix equation over finite fields via formulating an explicit formula for the cardinality of all solutions has been more realizable computationally and indeed it has a long history with substantial ramifications in the study of combinatorics and algebra, dating back to, for example, \cite{Hod57,Hod58} and \cite{Hod64}. 
Our objectives of this article are to calculate the cardinality of solutions to a class of matrix equations over finite fields, and to study the geometry of the orbits of these solutions under the natural conjugation action of the general linear groups.

Let $\F$ be a field and $n\in\N^+$ be a positive integer. Given an $n\times n$ matrix $A$ over $\F$, the  quadratic matrix equation $A\cdot X\cdot A=X\cdot A\cdot X$
called the \textbf{parameter-independent Yang-Baxter equation} over $\F$, has been studied for the various cases where $\F$ is the field of complex numbers and $A$ possesses some special properties; see for example \cite{DD16,DDH18} and the references therein. Throughout this article, $\F=\F_q$ denotes the finite field of order $q=p^s$ and we are interested in solving the parameter-independent Yang-Baxter equation over $\F_q$, when $A=\diag\{a,\dots,a\}$ is a scalar diagonal matrix over $\F_q$.

To articulate some extreme situations, we let $\M(n,q)$ denote the vector space of all $n\times n$ matrices over $\F_q$.
If $A$ is the zero matrix (i.e., $a=0$), then each $X\in\M(n,q)$ is a solution. Now assume that $a\neq 0$. Since $A$ commutes with every matrix in $\M(n,q)$, we see that deciding whether $X\in\M(n,q)$ satisfies the parameter-independent Yang-Baxter equation is tantamount to verifying whether $X$ is a solution of the following equation:
\begin{equation}\tag{$\ast$}
\label{ast}
X^{2}-A\cdot X=0.
\end{equation}
We observe that the zero matrix and $A$ itself are both solutions of this equation; in particular, if $n=1$, the two solutions are all solutions as the left-hand side of (\ref{ast}) is a polynomial in one variable of degree 2 in this case. Moreover, we also observe that for any $n\in\N^+$, if $X$ is a nonsingular solution, then $X$ must be $A$. Denote by $\NN(n,q)$ the set of all solutions to (\ref{ast}) in $\M(n,q)$. 
Thus $|\NN(n,q)|-2$ is exactly equal to the number of nonzero singular solutions in $\M(n,q)$ and the difficulty in determining $|\NN(n,q)|$ is to find all nonzero singular  $n\times n$ matrices satisfying the equation (\ref{ast}).

The bulk of the first two sections is to calculate the cardinality of those nonzero singular solutions to (\ref{ast}). 
An elementary observation (Proposition \ref{prop2.1}) shows that $\NN(n,q)$ could be endowed with a conjugation action of the general linear group. This allows us to capitalize on the orbit-stabilizer formula and rational canonical forms of matrices 
to determine the number $|\NN(n,q)|$. After summarizing some preparations about classical conjugation invariants, 
rational canonical forms, and computational steps,  we close Section \ref{sec2} with an explicit calculation for the case where $n=2$; see Example \ref{exam2.2}. We will deal with the cases of higher dimensions ($n\geqslant 3$) in Section 3. 
To accomplish this, the key is to reveal the concrete form of the rational canonical form of a nonzero singular solution in $\NN(n,q)$; see Lemma \ref{lem3.2}. As a consequence (Corollary \ref{coro3.3}), we prove, via constructing representatives in orbits, that the cardinality of the set $\OO(n,q)$ of all orbits of $\NN(n,q)$ under the conjugation action is equal to $n+1$. 
Using the orbit-stabilizer formula, we finally derive an explicit formula on the cardinality $|\NN(n,q)|$; see Theorem \ref{thm3.6}.

In Section \ref{sec4}, we prove that the classical conjugation invariants $\xi_1,\dots,\xi_n$ separate the set 
$\OO(n,q)$ of orbits (Theorem \ref{thm4.3}).  Example \ref{exam4.4} hints at the potential universality of our approach of separating invariants in studying geometric properties of orbits. Consider the image points of these orbits in $\F_q^n$
under the injection defined by $\xi_1,\dots,\xi_n$. We find an ideal $\I_n$ of $\F_q[x_1,\dots,x_n]$, via giving explicit  generators, such that the variety of $\I_n$ in $\F_q^n$ coincides with the image of $\OO(n,q)$; see Theorem \ref{thm4.7}.
A surprising result appears in Proposition \ref{prop4.6}, showing that the ideal $\I_n$ could be generated by ${n+1\choose 2}$ quadratic polynomials. 

\subsection*{Conventions} Throughout this article, $\N^+$ denotes the set of all positive integers. Let $I_n$ be the identity matrix of rank $n\in\N^+$.
For $B\in\M(k,q)$ and $C\in\M(\ell,q)$, we use $B\oplus C$ to denote the block matrix 
$\left(\begin{smallmatrix}
      B&0 \\
     0&C  \\
\end{smallmatrix}\right)$ in $\M(k+\ell, q)$.

\subsection*{Acknowledgements} This research was partially supported by NNSF of China (No. 11401087). The
authors would like to thank the referee for a careful reading of the paper and for helpful suggestions.
The symbolic computation language MAGMA \cite{BCP97} (http://magma.maths.usyd.edu.au/) was very helpful.

\section{Conjugation Actions and Rational Canonical Forms} \label{sec2}
\setcounter{equation}{0}
\renewcommand{\theequation}
{2.\arabic{equation}}
\setcounter{theorem}{0}
\renewcommand{\thetheorem}
{2.\arabic{theorem}}

\noindent  In this preliminary section, we let $n\geqslant 2$ and $\GL(n,q)$ be the general linear group of degree $n$ over $\F_q$. Recall that the conjugation action of  $\GL(n,q)$ on $\M(n,q)$ is defined by $(P,X)\mapsto P\cdot X\cdot P^{-1}$
for $P\in\GL(n,q)$ and $X\in\M(n,q)$. We write $[X]$ for the conjugacy class of $X$.
Moreover, the characteristic polynomial of $X\in\M(n,q)$ is defined as
\begin{equation}
\label{ }
\det(\lambda\cdot I_n-X)=\lambda^n+\sum_{i=1}^n (-1)^i \cdot \xi_i(X)\cdot \lambda^{n-i}
\end{equation}
where $\lambda$ is an indeterminate and the coefficients $\xi_1,\xi_2,\dots,\xi_n$ are algebraically independent invariants in  the invariant ring $\F_q[\M(n,q)]^{\GL(n,q)}:=\{f\in\F_q[\M(n,q)]\mid P\cdot f=f, \textrm{for all }P\in \GL(n,q)\}$, where
$\F_q[\M(n,q)]$ denotes the coordinate ring of the $n^2$-dimensional affine space $\M(n,q)$ and 
$(P\cdot f)(X):=f(P^{-1}(X))=f(P^{-1}\cdot X\cdot P)$ for all $X\in \M(n,q)$. In particular, $\xi_1$ and $\xi_n$ are just the well-known trace and determinant functions respectively. 
Note that unlike the classical case (over the complex field), these $\xi_i$ here do not generate the invariant ring; see \cite[Theorem 1.1]{Smi02} for the case $n=2$.

The following  result indicates that the conjugation action of $\GL(n,q)$ on $\M(n,q)$  restricts to 
an action on $\NN(n,q)$. We denote by $\OO(n,q)$  the set of orbits of $\NN(n,q)$ under this action. 

\begin{prop}\label{prop2.1}
If an $n\times n$ matrix $X\in\NN(n,q)$, then $Y\in\NN(n,q)$ for all $Y\in[X]$.
\end{prop}

\begin{proof}
Suppose that $Y=P\cdot X\cdot P^{-1}$ for some $P\in \GL(n,q)$.
Since $X^{2}=A\cdot X$, we see  that $Y^{2}-A\cdot Y=(P\cdot X\cdot P^{-1})^{2}-A\cdot P\cdot X\cdot P^{-1}=P\cdot X^{2}\cdot P^{-1}-P\cdot A\cdot X\cdot P^{-1}=P\cdot (X^2-A\cdot X)\cdot P^{-1}=0$. Hence,
$Y\in\NN(n,q)$.
\end{proof}

Consider a monic polynomial $f(x)=x^{k}+ \sum_{i=0}^{k-1} a_i\cdot  x^i \in\F_{q}[x]$. The  companion matrix of $f(x)$ is defined as
\begin{equation}
\label{ }
C(f):=\begin{pmatrix}
      0&1&0&\cdots&0   \\
      0&0&1&\ddots&\vdots \\
      \vdots&\ddots&\ddots&\ddots&0  \\
       0&0&\cdots&0&1   \\
     -a_{0}&-a_{1}&-a_{2}&\cdots&-a_{k-1} \\
\end{pmatrix}
\end{equation}
for $k\geqslant 2$ and $C(f):=(-a_{0})$ for $k=1$. Recall that every matrix $X\in \M(n,q)$ is similar to a diagonal block matrix of the form $C(f_{1})\oplus C(f_{2})\oplus \dots \oplus C(f_{r})$, called the \textbf{rational canonical form} of $X$, 
where $f_{1}(x),\dots,f_{r}(x)\in\F_{q}[x]$ are monic polynomials and $f_{i}(x)$ divides $f_{i+1}(x)$ for $i=1,2,\dots,r-1$; see for example
\cite[Theorem 16.15]{Bro93}. By Proposition \ref{prop2.1}, to determine whether $X$ is in $\NN(n,q)$, we may assume that
$X=C(f_{1})\oplus C(f_{2})\oplus \dots \oplus C(f_{r})$
and further, we write $A=A_{1}\oplus A_{2}\oplus \dots\oplus A_{r}$ as a block matrix such that the sizes of $A_{i}$ and $C(f_{i})$ are same for each $i$.
Clearly, (\ref{ast}) is completely determined  by the system of equations:
\begin{equation}
\label{eq2.3}
C(f_{i})^{2}-A_{i}\cdot C(f_{i})=0
\end{equation}
for $i=1,2,\dots,r$.

Based on these observations, we may proceed the following steps to determine the cardinality 
$|\NN(n,q)|$, i.e., the number of solutions to (\ref{ast}). 
\begin{enumerate}
  \item Determine all possible nonzero singular rational canonical forms $X_{1},\dots,X_{t}$ of $n\times n$ matrices. 
  \item Find those $X_j$ from $\{X_{1},\dots,X_{t}\}$  for which the system (\ref{eq2.3}) of equations follows, and denote  by $X_{1},\dots,X_{\ell}$  (relabelling if necessary), where $\ell=|\OO(n,q)|-2$ and $\ell\leqslant t$.
  \item For $i\in\{1,\dots,\ell\}$, calculate the order of the stabilizer subgroup $\GL(n,q)_{X_{i}}$ of $X_{i}$ in $\GL(n,q)$. 
  Since the number of all nonzero singular solutions to (\ref{ast}) equals $\sum_{i=1}^{\ell} |[X_{i}]|$ and
  $|\GL(n,q)|=|[X_{i}]|\cdot |\GL(n,q)_{X_{i}}|$, it follows that
  \begin{equation}
\label{eq2.4}
|\NN(n,q)|=2+\sum_{i=1}^{\ell}  |[X_{i}]|=2+\sum_{i=1}^{\ell} \frac{|\GL(n,q)|}{|\GL(n,q)_{X_{i}}|}.
\end{equation}
\end{enumerate}

We conclude this section with the following example that not only illustrates the above procedure but also 
serves to higher dimension cases in Section \ref{sec3}. 

\begin{exam}[$n=2$]\label{exam2.2}
{\rm
There are two possible rational canonical forms:
$\left(\begin{smallmatrix}
      -a_{0}&  0  \\
      0& -a_{0}
\end{smallmatrix}\right)$ and $\left(\begin{smallmatrix}
     0 & 1   \\
     -a_{0} &-a_{1}
\end{smallmatrix}\right)$ for $a_0,a_1\in\F_q$. As the first canonical form is either zero or nonsingular, 
the second one is the unique canonical form for nonzero singular solutions. Note that its determinant is $a_0$, thus
$a_0=0$. This means that we may suppose $X_{1}=\left(\begin{smallmatrix}
     0 & 1   \\
     0 &-a_{1}
\end{smallmatrix}\right)$ is an arbitrary nonzero singular solution. 
Substituting $C(f_{i})$ in (\ref{eq2.3}) with $X_{1}$, we have
$$0=\begin{pmatrix}
     0 & 1   \\
     0 &-a_{1}
\end{pmatrix}^{2}-\begin{pmatrix}
     a & 0   \\
    0 &a
\end{pmatrix}\begin{pmatrix}
     0 & 1   \\
     0&-a_{1}
\end{pmatrix}=\begin{pmatrix}
    0  &  -a_{1}-a  \\
    0  &  a_{1}^{2}+aa_{1}
\end{pmatrix}$$
which implies that $X_{1}=\left(\begin{smallmatrix}
     0 & 1   \\
     0&a
\end{smallmatrix}\right)$. To determine $|[X_{1}]|$, we need to determine
the order of the stabilizer subgroup $\GL(2,q)_{X_{1}}$. Here we take a direct approach to do that. 
Let $P=\left(\begin{smallmatrix}
     e&b   \\
     d&c
\end{smallmatrix}\right)\in \GL(2,q)_{X_{1}}$ be any element. As $P\cdot X_{1}\cdot P^{-1}=X_{1}$, it follows that
$$0=\begin{pmatrix}
     e &b   \\
     d &c
\end{pmatrix}\begin{pmatrix}
     0 & 1   \\
     0&a
\end{pmatrix}-\begin{pmatrix}
     0 & 1   \\
     0&a
\end{pmatrix}\begin{pmatrix}
     e &b   \\
     d&c
\end{pmatrix}=\begin{pmatrix}
    -d&e+ab -c     \\
    -ad  &  d
\end{pmatrix}.$$
Thus $P=\left(\begin{smallmatrix}
     c-ab&b   \\
     0&c
\end{smallmatrix}\right)$. Since $P$ is invertible, we see that $c\neq 0$ and $b\neq c/a$.
Hence, $|\GL(2,q)_{X_{1}}|=(q-1)^{2}.$
Recall that $|\GL(2,q)|=(q^{2}-1)(q^{2}-q)$. Therefore
$$|[X_{1}]|=\frac{|\GL(2,q)|}{|\GL(2,q)_{X_{1}}|}=\frac{(q^{2}-1)(q^{2}-q)}{(q-1)^{2}}=q^{2}+q$$
and $|\NN(2,q)|=q^2+q+2.$ 
\hbo}\end{exam}

\section{$|\NN(n,q)|(n\geqslant 3)$} \label{sec3}
\setcounter{equation}{0}
\renewcommand{\theequation}
{3.\arabic{equation}}
\setcounter{theorem}{0}
\renewcommand{\thetheorem}
{3.\arabic{theorem}}

\noindent In this section, we will determine the cardinality of $\NN(n,q)$.
Let $n\geqslant 3$ and $X\in\M(n,q)$ be a matrix. Usually, it is difficult to determine the rational canonical form for $X$ precisely. However, with the assumption that $X\in\NN(n,q)$,  the following lemma shows that the canonical form of $X$ will be built by 
rational canonical blocks of size less than or equal to 2.

\begin{lem}\label{lem3.1}
Let $f(x)\in\F_{q}[x]$ be a monic polynomial of degree $k\geqslant 3$.
Then $C(f)^{2}\neq A\cdot C(f)$.
\end{lem}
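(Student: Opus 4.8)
The plan is to exploit the fact that $A=a I_n$ is a scalar matrix, so that the block equation in the statement is really $C(f)^2 = a\cdot C(f)$. First I would rewrite this as $C(f)\bigl(C(f)-a I_k\bigr)=0$; that is, $C(f)$ is annihilated by the nonzero polynomial $g(x):=x^2-ax\in\F_q[x]$ of degree $2$.

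The key observation is that the companion matrix $C(f)$ is nonderogatory: its minimal polynomial coincides with its characteristic polynomial, both being $f(x)$. This is the defining feature of the rational canonical blocks and may be cited from the same source \cite{Bro93} invoked above. Consequently, every polynomial annihilating $C(f)$ is a multiple of $f$. If $C(f)^2 = a\cdot C(f)$ held, then $f(x)$ would divide $g(x)=x^2-ax$; but $\deg f=k\geqslant 3>2=\deg g$ while $g\neq 0$, so this divisibility is impossible. Hence $C(f)^2\neq a\cdot C(f)=A\cdot C(f)$.

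The only point requiring care is the reduction itself, namely that the relevant diagonal block $A_i$ of $A$ equals $a I_k$; this is immediate because $A$ is scalar. I do not expect a genuine obstacle, since the lemma is essentially a degree count once the minimal-polynomial property is in hand.

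Should one prefer a fully self-contained argument that avoids quoting that property, I would instead compare the two matrices entrywise. Writing out $C(f)^2$ and noting that, for $k\geqslant 3$, the $(1,3)$ entry of $C(f)^2$ equals $1$ while the $(1,3)$ entry of $a\cdot C(f)$ equals $0$ already exhibits a single coordinate on which the two sides disagree, which proves the inequality directly. This entrywise check is the more elementary route and matches the computational style of Example \ref{exam2.2}, but the degree-count argument is shorter and more transparent.
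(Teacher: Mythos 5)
Your proposal is correct, and your primary argument takes a genuinely different route from the paper's. The paper proves the lemma exactly as in your fallback paragraph: a direct entrywise comparison, noting that for $k\geqslant 3$ the $(1,3)$ entry of $C(f)^2$ equals $1$ (it is $C(f)_{2,3}=1$, since row $2$ is not yet the last row), whereas the $(1,3)$ entry of $A\cdot C(f)=a\cdot C(f)$ is $a\cdot 0=0$. Your main argument instead invokes the standard fact that a companion matrix is nonderogatory, so any annihilating polynomial is divisible by $f$; since $g(x)=x^2-ax$ is nonzero of degree $2<k$, the relation $C(f)^2=a\,C(f)$ is impossible. Both are complete proofs; your degree-count version is more conceptual and proves slightly more (no polynomial relation of degree less than $k$ can hold for $C(f)$, for any scalar $a$, including $a=0$), which explains structurally why only blocks of size $1$ or $2$ can occur in Lemma \ref{lem3.2}. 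The paper's entrywise check buys self-containedness: it needs no citation of the minimal-polynomial property and stays in the purely computational style of Example \ref{exam2.2}. Your one point of care -- that the relevant block of $A$ is the scalar matrix $aI_k$ -- is indeed immediate and handled identically (implicitly) in the paper.
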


\begin{proof}
A direct calculation shows that the entry at the first row and third column in $C(f)^{2}$ will be 1. However, the entry at the same position in $A\cdot C(f)$ is zero. Hence, $C(f)^{2}$ and $A\cdot C(f)$ are never equal.
\end{proof}

Note that we have determined nonzero singular rational canonical blocks of size 2 satisfying (\ref{eq2.3})  in Example \ref{exam2.2}.
Throughout this section, we let $Q(a):=\left(\begin{smallmatrix}
      0&1  \\
     0&a  \\
\end{smallmatrix}\right)$ and $P_m(b)$ be the diagonal matrix of size $m$ with the diagonals $b$ for $m\leqslant n$ and 
$b\in\F_q[\lambda]$, where $\lambda$ is an indeterminate. For $k\in\{1,\dots,\lfloor n/2\rfloor\}$, we define $Q_k(a)$ to be the direct sum of $k$ copies of $Q(a)$ and let
$$X_k(b,a):=P_{n-2k}(b)\oplus Q_k(a).$$
Note that when $n=2m$ is even, we make the convention that $X_{m}(b,a)=Q_{m}(a)$ for any $b$.

\begin{lem}\label{lem3.2}
Let $X\in\NN(n,q)$ be a nonzero singular matrix. Then $X$ is similar to either $X_k(0,a)$ or $X_k(a,a)$ for some $k\in\{1,\dots,\lfloor n/2\rfloor\}$.
\end{lem}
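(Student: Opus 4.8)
The plan is to exploit the rational canonical form together with Lemma \ref{lem3.1}. By Proposition \ref{prop2.1} I may assume $X = C(f_1)\oplus\cdots\oplus C(f_r)$ is already in rational canonical form, with monic $f_1\mid f_2\mid\cdots\mid f_r$ in $\F_q[\lambda]$. Since $A = aI_n$ is scalar, the single equation $X^2 = A\cdot X$ decouples blockwise into (\ref{eq2.3}), i.e.\ $C(f_i)^2 = a\cdot C(f_i)$ for every $i$. Lemma \ref{lem3.1} then forces $\deg f_i\leqslant 2$ for all $i$, so the entire canonical form is assembled from companion blocks of size $1$ and $2$.

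The next step is to classify which blocks of each size can occur. A size-$1$ block is $C(\lambda - c) = (c)$, and it satisfies the equation precisely when $c^2 = ac$; as $a\neq 0$ the two roots $0$ and $a$ are distinct, so $c\in\{0,a\}$. For a size-$2$ block the same computation carried out in Example \ref{exam2.2} (whose top-left entry already forces the constant term of $f$ to vanish) shows that $C(f)^2 = a\cdot C(f)$ pins down $f(\lambda) = \lambda(\lambda - a)$, and hence $C(f) = Q(a)$. Therefore each invariant factor lies in the set $\{\lambda,\ \lambda - a,\ \lambda(\lambda-a)\}$.

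The heart of the argument is to see how the divisibility chain $f_1\mid\cdots\mid f_r$ constrains the multiset of blocks. A degree-$1$ factor $\lambda$ and a degree-$1$ factor $\lambda - a$ cannot both appear, since neither divides the other; so for the chain to be totally ordered by divisibility all of its degree-$1$ factors must coincide, either all equal $\lambda$ or all equal $\lambda - a$, and since each of these divides $\lambda(\lambda - a)$ they must precede the degree-$2$ factors. Writing $k$ for the number of blocks equal to $Q(a)$, the remaining $n - 2k$ blocks share one common scalar: if it is $0$ then $X$ is similar to $P_{n-2k}(0)\oplus Q_k(a) = X_k(0,a)$, and if it is $a$ then $X$ is similar to $P_{n-2k}(a)\oplus Q_k(a) = X_k(a,a)$.

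It remains to locate $k$. If $k = 0$ then $X$ is the scalar matrix $0$ or $aI_n$, contradicting that $X$ is nonzero and singular; hence $k\geqslant 1$, while $2k\leqslant n$ gives $k\leqslant\lfloor n/2\rfloor$, with the stated convention $X_m(b,a) = Q_m(a)$ absorbing the boundary case $n = 2m$. I expect the divisibility step of the third paragraph to be the main obstacle: it is the point at which one must resist the naive guess that a zero block and an $aI_n$-type block could sit side by side, and instead recognize that every such mixed pair is forced by the invariant-factor condition to merge into a $Q(a)$ block. Equivalently, one may bypass Lemma \ref{lem3.1} altogether by observing that $X$ is annihilated by the separable polynomial $\lambda(\lambda-a)$ (separable because $a\neq 0$), hence diagonalizable over $\F_q$ with eigenvalues in $\{0,a\}$; then $k$ is simply the smaller of the two eigenvalue multiplicities and the majority eigenvalue fixes $b\in\{0,a\}$.
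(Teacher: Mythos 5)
Your proof is correct, and its main body follows the paper's own route almost step for step: pass to the rational canonical form, invoke Lemma \ref{lem3.1} to cap the block sizes at $2$, use the computation of Example \ref{exam2.2} to pin every $2\times 2$ block to $Q(a)$, solve $c^2=ac$ for the $1\times 1$ blocks, and let the divisibility chain force all $1\times 1$ blocks to share a single scalar $b\in\{0,a\}$, with $k\geqslant 1$ because otherwise $X$ would be $0$ or $aI_n$. One small point where your version is actually tighter than the paper's: you extract the vanishing of the constant term $a_0$ directly from the top-left entry of the block equation $C(f)^2=a\cdot C(f)$, whereas the paper derives $a_0=0$ from divisibility of the invariant factor $x^2+a_1x+a_0$ by the preceding linear factor, which it writes as $x$ --- a step that as phrased only covers the case $b=0$ (for $b=a$ the linear factor is $x-a$); your derivation is insensitive to $b$. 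The genuinely different content is your closing remark: since $X(X-aI_n)=0$ and $\lambda(\lambda-a)$ is separable (because $a\neq 0$), $X$ is diagonalizable over $\F_q$ with eigenvalues in $\{0,a\}$, nonzero-singularity forces both eigenvalues to occur, and $Q(a)$ being similar to $\diag\{0,a\}$ identifies $k$ as the smaller multiplicity. That argument proves the lemma with no appeal to Lemma \ref{lem3.1}, to rational canonical forms, or to Example \ref{exam2.2}, and it would also render the elementary-divisor computation of Lemma \ref{lem3.4} immediate; the paper's route buys instead a uniform use of the canonical-form machinery it has already set up for the counting in Section \ref{sec3}. Either way the statement stands.
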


\begin{proof} We use $X_0$ to denote the rational canonical form of $X$. 
By Lemma \ref{lem3.1}, the blocks appeared in $X_0$ are of size either 1 or 2. 
If these blocks are all $1\times 1$, we may assume that $X_0=\diag\{-a_1,-a_2,\dots,-a_n\}$. Note that $x+a_i$ divides $x+a_{i+1}$ for $i=1,\dots,n-1$, thus $a_1=\dots=a_n$. Hence, $X_0$
is either zero or nonsingular, contradicting with that $X$ is nonzero singular. This means that $X_0$ contains at least one
block of size 2. Now we suppose that 
$$X_0=\diag\{-a_1,-a_2,\dots,-a_{n-2k}\}\bigoplus\left(\oplus_{i=1}^k \left(\begin{smallmatrix}
     0 & 1   \\
     -a_{i,0} &-a_{i,1}
\end{smallmatrix}\right)\right)$$
for some $k\in\{1,\dots,\lfloor n/2\rfloor\}$. As before, since $x+a_i$ divides $x+a_{i+1}$ for $i=1,\dots,n-2k-1$, it follows that $a_1=\dots=a_{n-2k}$. By (\ref{eq2.3}), we see that
$$X_0=P_{n-2k}(b)\bigoplus\left(\oplus_{i=1}^k \left(\begin{smallmatrix}
     0 & 1   \\
     -a_{i,0} &-a_{i,1}
\end{smallmatrix}\right)\right)$$
where $b=0$ or $a$. Let $f_i=x^2+a_{i,1}x+a_{i,0}$ be the polynomial with $C(f_i)=\left(\begin{smallmatrix}
     0 & 1   \\
     -a_{i,0} &-a_{i,1}
\end{smallmatrix}\right)$.  Since $f_{i+1}$ is divisible by $f_i$ for each $i=1,\dots,k-1$, we see that 
$f_1=f_2=\dots=f_k$. Thus
$$X_0=P_{n-2k}(b)\bigoplus\left(\oplus_{i=1}^k \left(\begin{smallmatrix}
     0 & 1   \\
     -a_{0} &-a_{1}
\end{smallmatrix}\right)\right)$$
for some $a_0,a_1\in\F_q$. Note that the polynomial corresponding the $(n-2k)$-th block of $X_0$ is $x$ and
the polynomial corresponding the $(n-2k+1)$-th block is $x^2+a_{1}x+a_{0}$. Being divisible by $x$ for $x^2+a_{1}x+a_{0}$ implies that $a_0=0$. Applying (\ref{eq2.3}) again, it follows from Example \ref{exam2.2} that 
$a_1=-a$.
Therefore, $X_0=X_k(b,a)$, where $b=0$ or $a$.
\end{proof}

\begin{coro}\label{coro3.3}
The cardinality of $\OO(n,q)$ is $n+1$.
\end{coro}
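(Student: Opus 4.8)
The plan is to count the orbits by counting the distinct rational canonical forms occurring among the solutions, since two matrices lie in the same orbit of $\GL(n,q)$ exactly when they have the same rational canonical form. Two orbits are immediate: the zero matrix and the scalar matrix $A=aI_n$ are both solutions, and since $a\neq 0$ the former is singular while the latter is nonsingular (and central, so it is a singleton orbit), so they are distinct from one another and from every nonzero singular orbit.

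For the nonzero singular solutions, Lemma \ref{lem3.2} already furnishes a complete list of candidate representatives, namely $X_k(0,a)$ and $X_k(a,a)$ for $k\in\{1,\dots,\lfloor n/2\rfloor\}$. First I would check the converse to Lemma \ref{lem3.2}: each $X_k(b,a)$ really belongs to $\NN(n,q)$, so that every listed form indexes a nonempty orbit. Because $A=aI_n$, this reduces to the block identity $Q(a)^2=a\,Q(a)$ (verified in Example \ref{exam2.2}) together with the trivial scalar identities on the $P_{n-2k}(b)$ part.

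The heart of the argument is to decide which of these representatives are conjugate, and here I would exploit the characteristic polynomial as a conjugation invariant. Since $Q(a)$ has eigenvalues $0$ and $a$, one finds that $X_k(0,a)$ has characteristic polynomial $x^{n-k}(x-a)^{k}$ whereas $X_k(a,a)$ has $x^{k}(x-a)^{n-k}$. Distinct values of $k$ within a single family give distinct polynomials, hence distinct orbits; a coincidence between the two families forces $k+k'=n$, which under $k,k'\leqslant\lfloor n/2\rfloor$ can occur only when $n$ is even and $k=k'=n/2$ --- precisely the situation in which the convention $X_{n/2}(b,a)=Q_{n/2}(a)$ has already identified the two representatives. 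The bookkeeping then yields $2\lfloor n/2\rfloor$ distinct singular orbits when $n$ is odd and $2\lfloor n/2\rfloor-1$ when $n$ is even, that is $n-1$ in both parities; adding the orbits of $0$ and $A$ gives $n+1$.

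The step needing the most care is exactly this single overlap at $k=n/2$ for even $n$: missing it would produce $2\lfloor n/2\rfloor+2$ instead of $n+1$, so the crux is to confirm that the candidate list has this one genuine redundancy and no other. A cleaner route bypasses the parity case analysis altogether: since $a\neq 0$, every solution satisfies $X(X-aI_n)=0$ with $x(x-a)$ squarefree, so $X$ is diagonalizable with spectrum contained in $\{0,a\}$ and its orbit is determined solely by the multiplicity $j$ of the eigenvalue $a$; as $j$ runs over $\{0,1,\dots,n\}$ and each value is attained by $\diag(a,\dots,a,0,\dots,0)$, there are exactly $n+1$ orbits. I would take the invariant-factor count as the main line and record the diagonalizability argument as a remark.
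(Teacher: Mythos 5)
Your main line is essentially the paper's own proof: invoke Lemma \ref{lem3.2} for the list of candidate representatives and count them, the one delicate point being the identification $X_{n/2}(0,a)=X_{n/2}(a,a)=Q_{n/2}(a)$ when $n$ is even --- which you handle correctly. You do make explicit two things the paper leaves implicit: that each $X_k(b,a)$ actually lies in $\NN(n,q)$ (so no listed orbit is empty), and that the listed representatives are pairwise non-conjugate, which you get from the characteristic polynomials $\lambda^{n-k}(\lambda-a)^k$ versus $\lambda^{k}(\lambda-a)^{n-k}$ where the paper tacitly relies on uniqueness of the rational canonical form; both are valid. Your closing remark, however, is a genuinely different and more elementary route: since $a\neq 0$, the polynomial $x(x-a)$ is squarefree, so any $X$ with $X^2=aX$ is diagonalizable over $\F_q$ with spectrum in $\{0,a\}$ (equivalently $X=aE$ with $E$ idempotent), and its orbit is determined by the multiplicity $j$ of the eigenvalue $a$; each $j\in\{0,1,\dots,n\}$ is attained, giving exactly $n+1$ orbits. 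This bypasses Lemmas \ref{lem3.1} and \ref{lem3.2} and the parity bookkeeping entirely, and it even recovers Lemma \ref{lem3.2}, since $\diag(a,\dots,a,0,\dots,0)$ with $j$ copies of $a$ is conjugate to $X_k(b,a)$ for the appropriate $k$ and $b$. What the paper's heavier rational-canonical-form machinery buys is the data needed downstream: the elementary divisors of Lemma \ref{lem3.4} feed the stabilizer computation of Corollary \ref{coro3.5} and hence the counting formula of Theorem \ref{thm3.6}, so within the paper's broader program that route is not wasted --- but for this corollary alone your diagonalizability argument is the cleaner proof.
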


\begin{proof}
If $n=2m$ is even, then $\OO(n,q)=\{[P_{n}(0)],[P_{n}(a)],[Q_{m}(a)],[X_k(0,a)], [X_k(a,a)]\mid 1\leqslant k\leqslant m-1\}$.
Thus $|\OO(n,q)|=2(m-1)+3=n+1$. If $n=2m+1$ is odd, then 
$$\OO(n,q)=\{[P_{n}(0)],[P_{n}(a)],[X_k(0,a)], [X_k(a,a)]\mid 1\leqslant k\leqslant m\}.$$
Thus $|\OO(n,q)|=2m+2=n+1$.
\end{proof}

\begin{lem}\label{lem3.4}
If $k\in\{1,\dots,\lfloor n/2\rfloor\}$, then
\begin{enumerate}
  \item the elementary divisors of $X_k(0,a)$ consist of  $n-k$ copies of $\lambda$ and $k$ copies of $\lambda-a$; and
  \item the elementary divisors of $X_k(a,a)$ consist of $n-k$ copies of $\lambda-a$ and $k$ copies of $\lambda$.
\end{enumerate}
\end{lem}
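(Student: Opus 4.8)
The plan is to exploit the additivity of elementary divisors under the block-diagonal decomposition: for square matrices $B$ and $C$ over $\F_q$, the multiset of elementary divisors of $B\oplus C$ is the union (counted with multiplicity) of the multisets of elementary divisors of $B$ and of $C$, since the associated $\F_q[\lambda]$-module of $B\oplus C$ is the direct sum of those of $B$ and $C$, and primary components combine directly. Because each $X_k(b,a)=P_{n-2k}(b)\oplus Q_k(a)$ is by construction a direct sum of the scalar block $P_{n-2k}(b)$ with $k$ copies of $Q(a)$, it suffices to determine the elementary divisors of each elementary building block separately and then collect them.

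First I would record the two scalar blocks. The zero block $P_m(0)$ is annihilated by $\lambda$, so all its invariant factors equal $\lambda$ and it contributes exactly $m$ copies of the elementary divisor $\lambda$; likewise $P_m(a)=a\cdot I_m$ has minimal polynomial $\lambda-a$ and contributes $m$ copies of $\lambda-a$. Taking $m=n-2k$, the block $P_{n-2k}(0)$ gives $n-2k$ copies of $\lambda$, while $P_{n-2k}(a)$ gives $n-2k$ copies of $\lambda-a$.

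Next I would treat $Q(a)=\left(\begin{smallmatrix}0&1\\0&a\end{smallmatrix}\right)$. Its characteristic polynomial is $\det(\lambda I_2-Q(a))=\lambda(\lambda-a)$. Since $a\neq 0$ throughout this section, the roots $0$ and $a$ are distinct, so this polynomial is squarefree; hence the minimal polynomial coincides with it and $Q(a)$ is diagonalizable. Consequently $Q(a)$ has the two linear elementary divisors $\lambda$ and $\lambda-a$, each with multiplicity one, and the direct sum $Q_k(a)$ of $k$ copies therefore contributes $k$ copies of $\lambda$ and $k$ copies of $\lambda-a$.

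Finally I would assemble the counts. For $X_k(0,a)=P_{n-2k}(0)\oplus Q_k(a)$ the $\lambda$-divisors number $(n-2k)+k=n-k$ and the $(\lambda-a)$-divisors number $k$, giving statement (1); for $X_k(a,a)=P_{n-2k}(a)\oplus Q_k(a)$ the roles are symmetric, yielding $n-k$ copies of $\lambda-a$ and $k$ copies of $\lambda$, which is (2). The one genuine subtlety, worth isolating, is the diagonalizability of $Q(a)$: this is precisely where the standing hypothesis $a\neq 0$ enters, since for $a=0$ the block $Q(0)$ would be a single nilpotent Jordan block with the repeated elementary divisor $\lambda^2$, and the stated multiplicities would fail.
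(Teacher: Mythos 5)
Your proof is correct, but it takes a genuinely different route from the paper's. The paper works with the characteristic $\lambda$-matrix $\lambda\cdot I_n-X_k(b,a)$: it first diagonalizes each $2\times 2$ block $\lambda\cdot I_2-Q(a)$ by elementary transformations (using $a\neq 0$), then computes the full Smith normal form of the resulting diagonal $\lambda$-matrix by analyzing gcd's of minors, obtains all invariant factors $h_1(\lambda),\dots,h_n(\lambda)$ explicitly, and only at the end factors these to read off the elementary divisors. You bypass the Smith normal form entirely by invoking the module-theoretic fact that elementary divisors are additive under direct sums --- the $\F_q[\lambda]$-module of $B\oplus C$ is the direct sum of the modules of $B$ and $C$, and the primary cyclic decomposition is unique --- which reduces everything to the $1\times 1$ blocks (scalar, hence trivial) and to $Q(a)$, whose characteristic polynomial $\lambda(\lambda-a)$ is squarefree precisely because $a\neq 0$. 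This is a real simplification: the additivity you use holds for elementary divisors but \emph{not} for invariant factors, which is exactly why the paper's invariant-factor route forces the case analysis on the $h_i(\lambda)$ that you avoid. What the paper's computation buys in exchange is the explicit list of invariant factors (e.g.\ that $k$ of them equal $1$ and $k$ equal $\lambda(\lambda-a)$), which is more information than the lemma states, though only the elementary divisors are used later (in Corollary \ref{coro3.5} via the stabilizer-order formula). Your isolation of where $a\neq 0$ enters --- $Q(0)$ would be a nilpotent Jordan block with elementary divisor $\lambda^2$ --- matches the role this hypothesis plays in the paper's diagonalization step.
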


\begin{proof} Here all $\lambda$-matrices involved will be working over the polynomial ring $\F_q[\lambda]$.
Note that as $a$ is invertible, the $\lambda$-matrix $\left(\begin{smallmatrix}
      \lambda&-1  \\
     0&\lambda-a  \\
\end{smallmatrix}\right)$ of $Q(a)$ could be diagonalized via applying elementary transformations. In fact, 
$\left(\begin{smallmatrix}
     1&-\frac{1}{a} \\
     0&1  \\
\end{smallmatrix}\right)\left(\begin{smallmatrix}
      \lambda&-1  \\
     0&\lambda-a  \\
\end{smallmatrix}\right)\left(\begin{smallmatrix}
     1&\frac{1}{a} \\
     0&1  \\
\end{smallmatrix}\right)=\left(\begin{smallmatrix}
      \lambda&0  \\
     0&\lambda-a  \\
\end{smallmatrix}\right)=:\widetilde{Q}(a)$.  Let $\widetilde{Q}_k(a)$ denote the direct sum of $k$ copies of 
$\widetilde{Q}(a)$.

(1) We first capitalize on \cite[Lemma 16.12]{Bro93} to find all invariant factors of $X_k(0,a)$. Clearly, the $\lambda$-matrix  $\lambda\cdot I_n-X_k(0,a)$ is equivalent to  $P_{n-2k}(\lambda)\oplus \widetilde{Q}_k(a)$, which has the Smith normal form diag$\{h_1(\lambda),h_2(\lambda),\dots,h_n(\lambda)\}$, we say. Here
$h_i(\lambda)$ divides $h_{i+1}(\lambda)$ for $i=1,2,\dots,n-1$.
Since the values of minors of order $i$ of $P_{n-2k}(\lambda)\oplus \widetilde{Q}_k(a)$ are of forms $\lambda^j(\lambda-a)^{i-j}$ with $0\leqslant j\leqslant i$, it follows that $h_1(\lambda)=\cdots=h_k(\lambda)=1$ and $h_{n}(\lambda)=\lambda^{n-k}(\lambda-a)^k$. For $k+1\leqslant i\leqslant n-1$, we have
$$h_{i}(\lambda)=\begin{cases}
    \lambda^{i-k},  & i\leqslant n-k, \\
    \lambda^{i-k}(\lambda-a)^{i-(n-k)}, & i>n-k.
\end{cases}$$
Hence, the invariant factors of 
$\lambda\cdot I_n-X_k(0,a)$ consist of 
$$\{\underbrace{1,\dots,1}_k,\underbrace{\lambda,\dots,\lambda}_{n-2k},\underbrace{\lambda(\lambda-a),\dots,\lambda(\lambda-a)}_k\}$$
and the elementary factors contains $n-k$ copies of $\lambda$ and $k$ copies of $\lambda-a$.

(2) Similarly, we note that   $\lambda\cdot I_n-X_k(a,a)$ is equivalent to  $P_{n-2k}(\lambda-a)\oplus \widetilde{Q}_k(a)$ and assume that the corresponding Smith normal form is diag$\{h_1(\lambda),h_2(\lambda),\dots,h_n(\lambda)\}$. Observe that $h_1(\lambda)=\cdots=h_k(\lambda)=1$ and $h_{n}(\lambda)=\lambda^{k}(\lambda-a)^{n-k}$.
Switching  the roles of $\lambda$ and $\lambda-a$ in the previous case, we see that  for $k+1\leqslant i\leqslant n-1$, 
$$h_{i}(\lambda)=\begin{cases}
    (\lambda-a)^{i-k},  & i\leqslant n-k, \\
    (\lambda-a)^{i-k}\lambda^{i-(n-k)}, & i>n-k.
\end{cases}$$
Hence, the corresponding elementary factors consist of $n-k$ copies of $\lambda-a$ and $k$ copies of $\lambda$. 
\end{proof}

\begin{coro}\label{coro3.5}
For each $k\in\{1,\dots,\lfloor n/2\rfloor\}$ and $b\in\{0,a\}$, we have
$$|\GL(n,q)_{X_k(b,a)}|=|\GL(n-k,q)|\cdot |\GL(k,q)|.$$
\end{coro}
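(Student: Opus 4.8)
The plan is to exploit Lemma \ref{lem3.4}, which records that every elementary divisor of $X_k(b,a)$ is linear. A matrix all of whose elementary divisors are linear is diagonalizable, and since $a\neq 0$ the two scalars $0$ and $a$ are distinct; hence $X_k(b,a)$ is similar over $\F_q$ to a diagonal matrix $D$ whose only eigenvalues are $0$ and $a$. Reading off the multiplicities from Lemma \ref{lem3.4}, for $b=0$ the eigenvalue $0$ occurs $n-k$ times and $a$ occurs $k$ times, whereas for $b=a$ the eigenvalue $0$ occurs $k$ times and $a$ occurs $n-k$ times. In either case the two multiplicities form the set $\{n-k,k\}$.

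First I would observe that the stabilizer $\GL(n,q)_{X}$ under the conjugation action is precisely the centralizer $\{P\in\GL(n,q)\mid P\cdot X=X\cdot P\}$ of $X$, and that conjugate matrices have conjugate stabilizers: if $Y=P\cdot X\cdot P^{-1}$, then $\GL(n,q)_{Y}=P\cdot\GL(n,q)_{X}\cdot P^{-1}$, so the two stabilizers are equinumerous. In particular $|\GL(n,q)_{X_k(b,a)}|=|\GL(n,q)_{D}|$, and it therefore suffices to compute the order of the centralizer of the diagonal matrix $D$.

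Next I would compute this centralizer directly. Grouping coordinates so that $D=\diag\{\underbrace{0,\dots,0}_{m_0},\underbrace{a,\dots,a}_{m_a}\}$ with $\{m_0,m_a\}=\{n-k,k\}$, and writing a candidate $P\in\GL(n,q)$ in the corresponding $2\times 2$ block form $P=\left(\begin{smallmatrix} M_{11}&M_{12}\\ M_{21}&M_{22}\end{smallmatrix}\right)$, the equation $P\cdot D=D\cdot P$ forces the two off-diagonal blocks to vanish, because the scalars $0$ and $a$ are distinct. Hence $P=M_{11}\oplus M_{22}$ is block diagonal, and invertibility of $P$ is equivalent to $M_{11}\in\GL(m_0,q)$ together with $M_{22}\in\GL(m_a,q)$. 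Therefore $|\GL(n,q)_{D}|=|\GL(m_0,q)|\cdot|\GL(m_a,q)|$.

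Finally, substituting $\{m_0,m_a\}=\{n-k,k\}$ gives $|\GL(m_0,q)|\cdot|\GL(m_a,q)|=|\GL(n-k,q)|\cdot|\GL(k,q)|$ in both the $b=0$ and $b=a$ cases, which is the claimed formula. I do not expect a serious obstacle in this argument; the only point demanding care is the vanishing of the off-diagonal blocks in the centralizer computation, which is exactly where the standing hypothesis $a\neq 0$ (so that $0\neq a$) is indispensable.
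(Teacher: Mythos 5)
Your proof is correct, and it takes a genuinely different route from the paper's. The paper also starts from Lemma \ref{lem3.4}, but then simply cites a general formula, \cite[Theorem 6.14]{Hou18}, which computes the order of the centralizer of a matrix directly from its elementary divisors; this yields $|\GL(n,q)_{X_k(b,a)}|=q^{(n-k)^2+k^2}\cdot\prod_{i=1}^{n-k}(1-q^{-i})\cdot\prod_{j=1}^{k}(1-q^{-j})$, which the authors then recognize as $|\GL(n-k,q)|\cdot|\GL(k,q)|$. You instead exploit the special feature of this situation: since all elementary divisors in Lemma \ref{lem3.4} are linear, $X_k(b,a)$ is similar over $\F_q$ to the diagonal matrix $0\cdot I_{m_0}\oplus a\cdot I_{m_a}$ with $\{m_0,m_a\}=\{n-k,k\}$, and the centralizer of such a matrix can be computed by hand — the block equation $P\cdot D=D\cdot P$ forces the off-diagonal blocks to vanish precisely because $0\neq a$, leaving the block-diagonal group $\GL(m_0,q)\times\GL(m_a,q)$. (Your preliminary reductions — that the stabilizer is the centralizer, and that conjugate matrices have conjugate, hence equinumerous, stabilizers — are both sound, so passing to the diagonal representative is legitimate.) What each approach buys: yours is self-contained and elementary, needing no external reference and making transparent \emph{why} the answer is a product of two general linear group orders; the paper's citation is shorter on the page and would also handle matrices with nonlinear or repeated-power elementary divisors, generality that is not needed here since the solutions of $X^2=A\cdot X$ with $a\neq 0$ are automatically semisimple.
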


\begin{proof}
It follows from Lemma \ref{lem3.4} and \cite[Theorem 6.14]{Hou18} that $$|\GL(n,q)_{X_k(b,a)}|=q^{(n-k)^2+k^2}\cdot\prod_{i=1}^{n-k}(1-q^{-i})\cdot\prod_{j=1}^{k}(1-q^{-j})$$
which is exactly equal to the product of the orders of $\GL(n-k,q)$ and $\GL(k,q)$.
\end{proof}

Together (\ref{eq2.4}), Corollary \ref{coro3.5} and Example \ref{exam2.2} immediately imply that

\begin{thm} \label{thm3.6}
For $n\geqslant 2$, we have
$$
|\NN(n,q)| =  2+ \begin{cases}
   |\GL(2m,q)|\cdot \left(\frac{1}{|\GL(m,q)|^2}+\sum_{k=1}^{m-1}\frac{2}{|\GL(2m-k,q)|\cdot |\GL(k,q)|}\right),   & n=2m, \\
  |\GL(2m+1,q)|\cdot \sum_{k=1}^{m}\frac{2}{|\GL(2m-k+1,q)|\cdot |\GL(k,q)|},   & n=2m+1,
\end{cases}
$$where $|\GL(\ell,q)|=\prod_{i=0}^{\ell-1}(q^\ell-q^i)$ for every $\ell\in\N^+$.
\end{thm}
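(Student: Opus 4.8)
The plan is to assemble the formula directly from the orbit enumeration in Corollary \ref{coro3.3}, the stabilizer orders in Corollary \ref{coro3.5}, and the orbit-stabilizer identity (\ref{eq2.4}). Recall from (\ref{eq2.4}) that
\[
|\NN(n,q)| = 2 + \sum_{i=1}^{\ell} \frac{|\GL(n,q)|}{|\GL(n,q)_{X_i}|},
\]
where $X_1,\dots,X_\ell$ run over a complete set of representatives for the $\ell=|\OO(n,q)|-2$ orbits of nonzero singular solutions, the subtracted $2$ accounting for the two trivial orbits $[P_n(0)]$ (the zero matrix) and $[P_n(a)]$ (the matrix $A$). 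Thus the task reduces to listing these representatives and inserting their stabilizer orders.

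First I would read off the representatives from the two explicit descriptions of $\OO(n,q)$ produced in the proof of Corollary \ref{coro3.3}, handling the even and odd cases separately. When $n=2m$, the nonzero singular representatives are $Q_m(a)$ together with $X_k(0,a)$ and $X_k(a,a)$ for $1\leqslant k\leqslant m-1$; when $n=2m+1$, they are $X_k(0,a)$ and $X_k(a,a)$ for $1\leqslant k\leqslant m$. Next, for each representative I would substitute the stabilizer order from Corollary \ref{coro3.5}, namely $|\GL(n,q)_{X_k(b,a)}|=|\GL(n-k,q)|\cdot|\GL(k,q)|$, so that the corresponding orbit contributes $|\GL(n,q)|/(|\GL(n-k,q)|\cdot|\GL(k,q)|)$. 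Factoring out the common $|\GL(n,q)|$ and summing then yields the two displayed cases, after which the standard count $|\GL(\ell,q)|=\prod_{i=0}^{\ell-1}(q^\ell-q^i)$ is recorded.

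The one point demanding care — and the only genuine obstacle in what is otherwise bookkeeping — is the endpoint $k=m$ in the even case. Here the convention $X_m(b,a)=Q_m(a)$, independent of $b$, collapses what would be two separate representatives into the single orbit $[Q_m(a)]$, whose stabilizer has order $|\GL(m,q)|\cdot|\GL(m,q)|=|\GL(m,q)|^2$. This is precisely why the even formula isolates the term $1/|\GL(m,q)|^2$ with coefficient $1$, while the remaining terms $1\leqslant k\leqslant m-1$ carry coefficient $2$ (one contribution each for $b=0$ and $b=a$). In the odd case no such collapse occurs: every $k$ from $1$ to $m$ supplies the distinct pair $X_k(0,a),X_k(a,a)$, giving the uniform coefficient $2$ in the displayed sum.

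Finally, I would verify consistency with the low-dimensional data: setting $n=2$ (so $m=1$) makes the sum $\sum_{k=1}^{m-1}$ empty and leaves $|\NN(2,q)|=2+|\GL(2,q)|/|\GL(1,q)|^2=q^2+q+2$, in agreement with Example \ref{exam2.2}.
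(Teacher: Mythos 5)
Your proposal is correct and follows exactly the same route as the paper, which proves the theorem by combining the orbit-stabilizer formula (\ref{eq2.4}), the orbit representatives from Corollary \ref{coro3.3}, and the stabilizer orders from Corollary \ref{coro3.5}. In fact you supply more detail than the paper does (the even/odd case split, the collapse $X_m(b,a)=Q_m(a)$ explaining the lone $1/|\GL(m,q)|^2$ term, and the $n=2$ consistency check), all of which is accurate.
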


Note that here the construction of orbits in Corollary \ref{coro3.3} has been applied. We conclude this section by showcasing  $|\NN(n,q)|$ for several  small $n$.

\begin{exam}
{\rm
\begin{enumerate}
  \item $|\NN(2,q)|=q^2+q+2$. 
  \item $|\NN(3,q)|=2q^2(q^2+q+1)+2$. 
  \item $|\NN(4,q)|=q^3(q^2+1)(q^3+q^2+3q+2)+2$. 
  \item $|\NN(5,q)|=2q^4(q^2-q+1)(q^2+q+1)(q^4+q^3+q^2+q+1)+2$. \hbo
\end{enumerate}
}\end{exam}
\section{Separating Invariants} \label{sec4}
\setcounter{equation}{0}
\renewcommand{\theequation}
{4.\arabic{equation}}
\setcounter{theorem}{0}
\renewcommand{\thetheorem}
{4.\arabic{theorem}}

\noindent In this section, we separate the orbits via invariants and find a generating set for the vanishing ideal of these orbits.
Consider the set $\OO(n,q)$ of orbits and the classical conjugation invariants $\xi_1,\xi_2,\dots,\xi_n$. 
The map $\xi:\OO(n,q)\ra\F_q^n$ given by  $X\mapsto (\xi_1(X),\xi_2(X),\dots,\xi_n(X))$ is well-defined.
Let $\A$ be the set of all functions from $\OO(n,q)$ to $\F_q$. We say that a subset $\B\subseteq \A$
is \textbf{separating} for $\OO(n,q)$ if for any two distinct orbits $X,Y\in\OO(n,q)$, there exists a function 
$f\in\B$ such that $f(X)\neq f(Y)$; see \cite[Section 2.4]{DK15}, \cite{CSW21, KLR20} and \cite{CCSW19} for more details and recent development on separating invariants.

\begin{lem}\label{lem4.1}
The map $\xi$ is injective if and only if $\{\xi_1,\xi_2,\dots,\xi_n\}$ is separating for $\OO(n,q)$.
\end{lem}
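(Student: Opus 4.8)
The plan is to observe that the stated equivalence is essentially a matter of unwinding the two definitions, resting on the elementary fact that two vectors in $\F_q^n$ coincide precisely when they agree in every coordinate. Writing $\xi(X) = (\xi_1(X), \dots, \xi_n(X))$ for an orbit $X \in \OO(n,q)$, the equality $\xi(X) = \xi(Y)$ holds if and only if $\xi_i(X) = \xi_i(Y)$ for every $i \in \{1, \dots, n\}$; equivalently, by contraposition, $\xi(X) \neq \xi(Y)$ holds if and only if there exists some index $i$ with $\xi_i(X) \neq \xi_i(Y)$. This single coordinatewise reformulation is the only substantive ingredient, and the rest of the argument is bookkeeping.

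For the forward implication, I would assume $\xi$ is injective and fix two distinct orbits $X, Y \in \OO(n,q)$. Injectivity yields $\xi(X) \neq \xi(Y)$, so by the coordinatewise reformulation there is an index $i$ with $\xi_i(X) \neq \xi_i(Y)$; since $\xi_i \in \{\xi_1, \dots, \xi_n\}$, this exhibits a separating function and shows that $\{\xi_1, \dots, \xi_n\}$ is separating for $\OO(n,q)$. For the converse, I would assume $\{\xi_1, \dots, \xi_n\}$ is separating and take orbits $X, Y$ with $\xi(X) = \xi(Y)$. Then $\xi_i(X) = \xi_i(Y)$ for all $i$, so no function in $\{\xi_1, \dots, \xi_n\}$ distinguishes $X$ from $Y$; the separating property then forces $X = Y$, which is exactly the injectivity of $\xi$.

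There is no genuine obstacle here: the lemma is a direct translation between the language of injective invariant maps and the language of separating sets, and the proof is a short definition-chase. The only point deserving attention is that the map $\xi$ must be well-defined on orbits, so that the symbols $\xi_i(X)$ depend only on the orbit and not on a chosen representative; this is guaranteed because each $\xi_i$ is a conjugation invariant and hence constant on every conjugacy class, a fact already recorded in the discussion preceding the lemma.
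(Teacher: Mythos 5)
Your proof is correct and takes essentially the same approach as the paper: both arguments are a direct definition-chase using the fact that $\xi(X)=\xi(Y)$ if and only if $\xi_i(X)=\xi_i(Y)$ for all $i$, with only cosmetic differences (the paper runs the forward direction by contradiction, you argue directly and also note well-definedness of $\xi$ on orbits).
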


\begin{proof}
Assume that $\xi$ is injective and $\{\xi_1,\xi_2,\dots,\xi_n\}$ is not separating. Then there exist two distinct orbits $X,Y\in\OO(n,q)$ such that $\xi_i(X)=\xi_i(Y)$ for all $i=1,\dots,n$. Thus $\xi(X)=\xi(Y)$, which contradicts with the assumption that $\xi$ is injective. Conversely, if $\{\xi_1,\xi_2,\dots,\xi_n\}$ is separating, then for any two distinct orbits $X,Y\in\OO(n,q)$, there exists some $i\in\{1,\dots,n\}$ such that $\xi_i(X)\neq \xi_i(Y)$. Thus
$\xi(X)\neq\xi(Y)$ and $\xi$ is injective.
\end{proof}

\begin{lem}\label{lem4.2}
The cardinality of the image of $\xi$ is equal to $n+1$.
\end{lem}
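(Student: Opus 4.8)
The plan is to reduce the statement to a bookkeeping count of characteristic polynomials. Since the map $\xi$ records exactly the coefficients of the characteristic polynomial of a representative, two orbits coincide under $\xi$ if and only if their representatives share the same characteristic polynomial. By Corollary \ref{coro3.3} there are exactly $n+1$ orbits, so the image of $\xi$ has cardinality at most $n+1$; it therefore suffices to show that the $n+1$ representatives exhibited in the proof of Corollary \ref{coro3.3} have pairwise distinct characteristic polynomials.

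First I would write down these characteristic polynomials using Lemma \ref{lem3.4}. Since $X_k(0,a)$ has elementary divisors given by $n-k$ copies of $\lambda$ and $k$ copies of $\lambda-a$, its characteristic polynomial is $\lambda^{n-k}(\lambda-a)^{k}$; likewise $X_k(a,a)$ has characteristic polynomial $\lambda^{k}(\lambda-a)^{n-k}$. The three distinguished representatives contribute $\lambda^{n}$ for $P_n(0)$, $(\lambda-a)^{n}$ for $P_n(a)$, and, in the even case $n=2m$, the polynomial $\lambda^{m}(\lambda-a)^{m}$ for $Q_m(a)$. In every case the characteristic polynomial has the shape $\lambda^{j}(\lambda-a)^{n-j}$ for a suitable exponent $j\in\{0,1,\dots,n\}$.

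Next I would verify that, as one ranges over all $n+1$ orbits, the exponent $j$ assumes each value in $\{0,1,\dots,n\}$ exactly once. In the odd case $n=2m+1$: the orbit $[P_n(a)]$ gives $j=0$; the family $[X_k(a,a)]$ with $1\leqslant k\leqslant m$ gives $j=1,\dots,m$; the family $[X_k(0,a)]$ with $1\leqslant k\leqslant m$ gives $j=n-k=m+1,\dots,2m$; and $[P_n(0)]$ gives $j=n$, a bijection onto $\{0,\dots,n\}$. In the even case $n=2m$ the two families run only to $k=m-1$, leaving the single gap $j=m$, which is filled precisely by $[Q_m(a)]$. Because $a\neq 0$, the linear polynomials $\lambda$ and $\lambda-a$ are distinct monic irreducibles in $\F_q[\lambda]$, so by unique factorization the $n+1$ polynomials $\lambda^{j}(\lambda-a)^{n-j}$ are pairwise distinct; hence so are their coefficient vectors, and $\xi$ takes exactly $n+1$ distinct values.

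The computation is entirely routine once Lemma \ref{lem3.4} is available, so I do not expect a genuine obstacle. The only point demanding care is the interleaving of the two families $[X_k(0,a)]$ and $[X_k(a,a)]$ with the degenerate representative $[Q_m(a)]$ in the even case: this is exactly where one must confirm that no exponent $j$ is repeated or omitted, and I expect this mild case-split to be the sole place where the argument could go wrong.
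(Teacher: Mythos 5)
Your proof is correct, and while its skeleton matches the paper's (both enumerate the $n+1$ orbits from Corollary \ref{coro3.3} and compute the characteristic polynomials of the representatives), the key distinctness step is handled by a genuinely different argument. The paper expands each characteristic polynomial into its coefficient vector, e.g. $\xi([X_k(0,a)])=\left(k\cdot a,{k\choose 2}\cdot a^2,\dots,a^k,0,\dots,0\right)$, orders the $n+1$ vectors suitably, and observes that the nonzero ones form the rows of a lower triangular matrix with diagonal $a,a^2,\dots,a^n$, invertible since $a\neq 0$; distinctness follows from invertibility. You instead stay at the level of factored polynomials: every representative has characteristic polynomial $\lambda^{j}(\lambda-a)^{n-j}$, you check (with the correct even/odd case split, the even case absorbing $[Q_m(a)]$ at $j=m$) that the exponent $j$ runs bijectively over $\{0,1,\dots,n\}$, and unique factorization in $\F_q[\lambda]$ together with $a\neq 0$ gives pairwise distinctness; since the coefficient vector determines and is determined by the monic characteristic polynomial, the image has exactly $n+1$ points. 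Your route avoids the binomial-coefficient bookkeeping and the matrix argument entirely, and is arguably cleaner and more transparent. What the paper's explicit expansion buys in exchange is the concrete coordinates $v_0,v_1,\dots,v_n$ of the image points: these are reused immediately afterwards, in the proof of Theorem \ref{thm4.3} (where $\{\xi_1(X)\mid X\in\OO(n,q)\}=\{k\cdot a\mid 0\leqslant k\leqslant n\}$ yields the $p>n$ statement) and in the construction of the ideal $\I_n$, so in context the computational effort is not wasted; if one adopted your proof, those coordinates would still have to be computed separately later.
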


\begin{proof}
Let $\varphi_\lambda(X):=\det(\lambda\cdot I_n-X)$ be the characteristic polynomial of a matrix $X\in\M(n,q)$.
Note that $\varphi_\lambda(P_n(0))=\lambda^n$ and $\varphi_\lambda(P_n(a))=(\lambda-a)^n$. Thus
$\xi([P_n(0)])=(0,0,\dots,0)$ and $\xi([P_n(a)])=\left(n\cdot a,{n\choose 2}\cdot a^2,\dots,a^n\right)$.
By Corollary \ref{coro3.3}, for each remaining orbit $[X]\in \OO(n,q)\setminus\{[P_n(0)],[P_n(a)]\}$, there exist some
$k\in \{1,\dots,\lfloor n/2\rfloor\}$ such that $\varphi_\lambda(X)=(\lambda-b)^{n-2k}\cdot\lambda^k\cdot (\lambda-a)^k$
where $b\in\{0,a\}$. 

Assume that $n=2m$ is even. For $1\leqslant k\leqslant m-1$, we have $\varphi_\lambda(X_k(0,a))=\lambda^{2m-k}\cdot (\lambda-a)^k$ and  $\xi([X_k(0,a)])=\left(k\cdot a,{k\choose 2}\cdot a^2,\dots,a^k,0,\dots,0\right)$.
Since $\varphi_\lambda(Q_m(a))=\lambda^m\cdot (\lambda-a)^m$, it follows that
$$\xi([Q_m(a)])=\left(m\cdot a,{m\choose 2}\cdot a^2,\dots,a^m,0,\dots,0\right).$$
Furthermore, as $\varphi_\lambda(X_k(a,a))=\lambda^{k}\cdot (\lambda-a)^{2m-k}$, we see that 
$$\xi([X_k(a,a)])=\left((2m-k)\cdot a,{2m-k\choose 2}\cdot a^2,\dots,a^{2m-k},0,\dots,0\right)$$
for $1\leqslant k\leqslant m-1$. Consider the ordered sequence 
$\xi([P_{2m}(0)]),\xi([X_1(0,a)]), \dots, \xi([X_{m-1}(0,a)])$, $\xi([Q_m(a)]),\xi([X_{m-1}(a,a)]),\dots,\xi([X_1(a,a)]), \xi([P_{2m}(a)])$. Arraying the last $2m$ items into rows, we obtain a $2m\times 2m$ lower triangular matrix:
$$\begin{pmatrix}
     a & 0& \cdots& 0  \\
      \ast&a^2& \ddots&\vdots \\
      \vdots&\ddots&\ddots&0\\
      \ast&\cdots&\cdots&a^{2m}
\end{pmatrix}$$
which is invertible as $a\neq 0.$ This fact shows that 
the map $\xi$ evaluating on $\OO(2m,q)$ has $2m+1$ distinct values.
A similar argument also applies to the case where $n=2m+1$ is odd. Finally, we conclude that 
the cardinality of the image of $\xi$ is equal to $n+1$.
\end{proof}

\begin{thm}\label{thm4.3}
The set $\{\xi_1,\xi_2,\dots,\xi_n\}$ is separating for $\OO(n,q)$. Moreover, if $p>n$, then
$\xi_1$ can separate orbits in $\OO(n,q)$. 
\end{thm}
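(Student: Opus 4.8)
The plan is to prove both assertions of Theorem~\ref{thm4.3} by reducing everything to the computation of the image points $\xi([\,\cdot\,])$ already carried out in Lemma~\ref{lem4.2}. For the first assertion, I would invoke Lemma~\ref{lem4.1}: since separation of $\OO(n,q)$ by $\{\xi_1,\dots,\xi_n\}$ is \emph{equivalent} to injectivity of the map $\xi$, it suffices to show $\xi$ is injective. But by Corollary~\ref{coro3.3} the domain $\OO(n,q)$ has exactly $n+1$ elements, and Lemma~\ref{lem4.2} states that the image of $\xi$ has cardinality $n+1$ as well. A surjection (onto its image) between finite sets of equal cardinality is automatically a bijection, hence $\xi$ is injective. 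This closes the first claim with essentially no new computation.

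For the second assertion, the goal is to show that under the hypothesis $p>n$ the single invariant $\xi_1$ (the trace) already separates all $n+1$ orbits. The natural approach is to read off the first coordinate $\xi_1([X])$ from each of the image points recorded in the proof of Lemma~\ref{lem4.2}. From those formulas one has $\xi_1([P_n(0)])=0$, $\xi_1([P_n(a)])=n\cdot a$, and for the remaining orbits with characteristic polynomial $(\lambda-b)^{n-2k}\lambda^k(\lambda-a)^k$ the trace equals $(n-2k)b+ka$; concretely $\xi_1([X_k(0,a)])=k\cdot a$ and $\xi_1([X_k(a,a)])=(n-k)\cdot a$ (and $\xi_1([Q_m(a)])=m\cdot a$ when $n=2m$). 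I would then list all these trace values as integer multiples of $a$, namely the multipliers $0,\,n,\,k$ (for $1\le k\le\lfloor n/2\rfloor$), and $n-k$ (for the $X_k(a,a)$ orbits), and verify that as elements of $\F_q$ these multiples are pairwise distinct.

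The crux of the argument — and the place where the hypothesis $p>n$ is used — is exactly this distinctness check. All the multipliers appearing are integers lying in the range $\{0,1,\dots,n\}$, and two orbits collide under $\xi_1$ precisely when two of these integers become equal modulo $p$. Since every multiplier is a nonnegative integer at most $n$, and since $a\neq 0$ is a unit, the values $j\cdot a$ and $j'\cdot a$ coincide iff $j\equiv j'\pmod p$; when $p>n$ any two distinct integers in $\{0,\dots,n\}$ differ by less than $p$ and hence are incongruent mod $p$. The one point demanding care is to confirm that the set of multipliers actually arising from the $n+1$ distinct orbits is itself a set of \emph{distinct} integers in $\{0,\dots,n\}$ — i.e.\ that no two different orbits already share the same integer multiplier before reduction mod $p$. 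I would handle the even and odd cases of $n$ separately, checking that $\{0,n\}\cup\{k:1\le k\le\lfloor n/2\rfloor\}\cup\{n-k:1\le k\le\lceil n/2\rceil-1\}$ (together with $m$ when $n=2m$) enumerates $\{0,1,\dots,n\}$ without repetition, matching the count $n+1$ from Corollary~\ref{coro3.3}.

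I expect the main obstacle to be purely bookkeeping: organizing the multipliers so that the even case ($n=2m$, where the middle orbit $[Q_m(a)]$ contributes the multiplier $m$) and the odd case ($n=2m+1$) are each seen to produce every residue $0,1,\dots,n$ exactly once. Once that combinatorial identification is in place, the separation over $\F_q$ follows immediately from $p>n$, and no genuine difficulty remains. It is worth remarking that the hypothesis $p>n$ is essential only for the single-invariant statement; the first assertion holds unconditionally because it rests on the full triangular system exhibited in Lemma~\ref{lem4.2} rather than on the trace alone.
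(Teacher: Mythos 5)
Your proof is correct and takes essentially the same route as the paper: the first claim via Lemma \ref{lem4.1} plus the cardinality match between Corollary \ref{coro3.3} and Lemma \ref{lem4.2}, and the second claim by observing that the trace values on the $n+1$ orbits are exactly $\{k\cdot a\mid 0\leqslant k\leqslant n\}$, which remain pairwise distinct when $p>n$. The only difference is that you spell out the bookkeeping verifying the integer multipliers enumerate $\{0,1,\dots,n\}$ without repetition, which the paper leaves implicit.
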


\begin{proof}
Lemma \ref{lem4.2} together with Corollary \ref{coro3.3} implies that $\xi$ is injective. By Lemma \ref{lem4.1}, we see that 
$\xi_1,\xi_2,\dots,\xi_n$ separate the orbit set $\OO(n,q)$. For the second statement, we note in the proof of Lemma \ref{lem4.2} that $\{\xi_1(X)\mid X\in\OO(n,q)\}=\{k\cdot a\mid 0\leqslant k\leqslant n\}$, which has cardinality $n+1$, by 
the assumption $p>n.$ Hence, $\xi_1$ separates all orbits in $\OO(n,q)$.
\end{proof}

The following example illustrates that when $p\leqslant n$, $\xi_2,\dots,\xi_n$ might not be superfluous.  

\begin{exam}\label{exam4.4}
{\rm
Suppose that $n=3$. Then $\OO(3,q)=\{[P_3(0)], [X_1(0,a)], [X_1(a,a)], [P_3(a)]\}$ and the values of
$\xi$ on elements of $\OO(3,q)$ are: $(0,0,0), (a,0,0), (2a,a^2,0), (3a,3a^2,a^3)$ respectively. 

(1) If $p=2$, then either $\xi_1$ or $\xi_3$ can not separate the orbits $[P_3(0)]$ and $[X_1(a,a)]$. 
Hence, $\xi_2$ can not be removed in this case. However, $\xi_3$ is superfluous. In fact, the map
$$\OO(3,q)\ra \F_q^2, \quad X\mapsto (\xi_1(X),\xi_2(X))$$
is injective. Via this injection, we observe that orbits of $\OO(3,q)$ forms a rectangle in the plane $\F_q^2$:
\begin{center}
\begin{tikzpicture}
\fill (-1,-1) circle (.07);
\node at (-1,-1) [left] {$(0,0)$};

\draw[fill] (-1,1.5) circle (.07);
\node at (-1,1.5) [left] {$(0,a^2)$};

\draw[fill] (0.7,-1) circle (.07);
\node at (0.7,-1) [right] {$(a,0)$};

\draw[fill] (0.7,1.5) circle (.07);
\node at (0.7,1.5) [right] {$(a,a^2)$};

\draw[color=blue, very thick, dotted, ->]  (-0.9,-1) -- (0.6,-1);
\draw[color=blue,  very thick, dotted, ->]  (-1,-0.9) -- (-1,1.4);
\draw[color=blue,  very thick, dotted, ->]  (-0.9,-0.9) -- (0.6,1.4);

\end{tikzpicture}
\end{center}
where the four points $(0,0), (a,0), (0,a^2), (a,a^2)$ correspond to $[P_3(0)], [X_1(0,a)], [X_1(a,a)], [P_3(a)]$ in $\OO(3,q)$
respectively.

(2) Assume that $p=3$. The functions $\xi_1$ and $\xi_2$ can not separate the orbits $[P_3(0)]$ and $[P_3(a)]$. Thus $\xi_3$ is necessary in this case. After removing $\xi_2$ in $\xi$, the injective map $\OO(3,q)\ra \F_q^2$ defined by $X\mapsto (\xi_1(X),\xi_3(X))$ embeds $\OO(3,q)$ into an isosceles triangle in $\F_q^2$:
\begin{center}
\begin{tikzpicture}
\fill (0,0) circle (.07);
\node at (0,0) [below] {$(0,0)$};

\draw[fill] (0,1.5) circle (.07);
\node at (0,1.5) [above] {$(0,a^3)$};

\draw[fill] (2,0) circle (.07);
\node at (2,0) [right] {$(a,0)$};

\draw[fill] (-2,0) circle (.07);
\node at (-2,0) [left] {$(-a,0)$};

\draw[color=blue, very thick, dotted, ->]  (0,0.1) -- (0,1.4);
\draw[color=blue,  very thick, dotted, ->]  (0.1,0) -- (1.9,0);
\draw[color=blue,  very thick, dotted, ->]  (-0.1,0) -- (-1.9,0);
\end{tikzpicture}
\end{center}
where  $(0,0), (a,0), (-a,0), (0,a^3)$ correspond to $[P_3(0)], [X_1(0,a)], [X_1(a,a)], [P_3(a)]$ in $\OO(3,q)$
respectively. 
\hbo
}\end{exam}

We look back at the image points of $\OO(n,q)$ in $\F_q^n$ via the map $\xi$. As in the proof of Lemma \ref{lem4.2}, we use $v_0,v_1,\dots,v_n$ to denote these points respectively. More precisely, $v_0=(0,0,\dots,0)$ and
$$v_1=(a,0,\dots,0),v_2=\left({2\choose 1}a,{2\choose 2}a^2,0,\dots,0\right),\dots, v_n=\left({n\choose 1}a,{n\choose 2}a^2,\dots,{n\choose n}a^n\right).$$
The rest of this section is devoted to finding an  ideal $\I_n\subseteq \A_n:=\F_q[x_1,\dots,x_n]$ such that $\V_n:=\{v_i\mid i=0,1,\dots,n\}$ is the variety (i.e., set of zeros) of $\I_n$ in $\F_q^n.$ Throughout we denote by $V(\I_n)$ the variety of
$\I_n$.

We start with the case $n=2$.

\begin{prop}\label{prop4.5}
Let $\I_2$ be the ideal of $\A_2$ generated by 
$$\B_2:=\{f_{22}:=x_2^2-a^2x_2, f_{21}:=x_2x_1-2ax_2, f_{11}:=x_1^2-ax_1-2x_2\}.$$
Then $V(\I_2)=\V_2$.
\end{prop}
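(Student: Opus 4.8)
The plan is to prove the set equality $V(\I_2) = \V_2$ by establishing the two inclusions separately, where $V(\I_2) \subseteq \F_q^2$ is the common zero locus of the three generators $f_{22}, f_{21}, f_{11}$ and $\V_2 = \{v_0, v_1, v_2\}$ with $v_0 = (0,0)$, $v_1 = (a,0)$, and $v_2 = (2a, a^2)$. Both inclusions come down to elementary polynomial manipulation, so I do not anticipate any deep obstruction; the only hypothesis I genuinely need is $a \neq 0$, which holds throughout the relevant regime of the paper.

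For the inclusion $\V_2 \subseteq V(\I_2)$, I would substitute each of $v_0, v_1, v_2$ into the three generators and verify that every value vanishes. For example, $f_{11}(v_1) = a^2 - a\cdot a - 0 = 0$ and $f_{11}(v_2) = 4a^2 - 2a^2 - 2a^2 = 0$, and the remaining evaluations are equally immediate.

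The substance lies in the reverse inclusion $V(\I_2) \subseteq \V_2$, which I would handle by casework after noting that two of the generators factor, namely $f_{22} = x_2(x_2 - a^2)$ and $f_{21} = x_2(x_1 - 2a)$. Let $(x_1, x_2)$ be an arbitrary common zero. The vanishing of $f_{22}$ forces $x_2 \in \{0, a^2\}$. If $x_2 = 0$, then $f_{11} = 0$ collapses to $x_1(x_1 - a) = 0$, so $(x_1, x_2) \in \{v_0, v_1\}$. If instead $x_2 = a^2$, then $a \neq 0$ gives $x_2 \neq 0$, so the vanishing of $f_{21}$ forces $x_1 = 2a$, and a direct check shows that $f_{11}$ then vanishes automatically, yielding $(x_1, x_2) = v_2$. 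Hence every common zero lies in $\V_2$.

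Combining the two inclusions gives $V(\I_2) = \V_2$. The argument is computational rather than conceptual; the one place deserving attention is the implication $x_2 = a^2 \Rightarrow x_1 = 2a$, where invertibility of $a$ is precisely what eliminates extraneous solutions and isolates $v_2$. I expect this casework to be the only part requiring a moment of thought, and even it is routine.
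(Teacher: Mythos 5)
Your proof is correct and follows essentially the same route as the paper: both use the vanishing of $f_{22}$ to force the second coordinate into $\{0,a^2\}$, then split into the two cases, with $f_{11}$ pinning down $x_1\in\{0,a\}$ when $x_2=0$ and $f_{21}$ (via $a\neq 0$) forcing $x_1=2a$ when $x_2=a^2$. Your explicit factorizations and the separate verification of $\V_2\subseteq V(\I_2)$ only make more visible what the paper's casework handles implicitly.
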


\begin{proof}
Assume that $v=(c_1,c_2)\in V(\I_2)$ is any element. Since $f_{22}(v)=0$, we see that $c_2$ is equal to either 0 or $a^2$. If $c_2=0$, then $f_{21}(v)=0$ for any $c_1$, and the fact that $f_{11}(v)=0$ implies that 
$c_1\in\{0,a\}$. If $c_2=a^2$, it follows from the fact that $f_{21}(v)=0$ that $c_1=2a$. Clearly, the valuation of $f_{11}$ at
$(2a,a^2)$ is zero. Hence, $V(\I_2)=\{(0,0), (a,0),(2a,a^2)\}=\V_2$.
\end{proof}

We regard $\A_1\subseteq\cdots\subseteq\A_k\subseteq \A_{k+1}\subseteq\cdots\subseteq\A_n$ as a sequence of containments of $\F_q$-subalgebras of $\A_n$.
For $n\geqslant 3$, we define 
$$\B_n:=\left\{f-\frac{f(w_n)}{a^n}\cdot x_n\mid f\in\B_{n-1}\right\}\cup\left\{x_n\cdot x_{i}-{n\choose i}a^{i}\cdot x_n\mid i=1,\dots,n\right\}$$
where $w_{n}:=\left({n\choose 1}a,{n\choose 2}a^2,\dots,{n\choose n-1}a^{n-1}\right)\in\F_q^{n-1}$. 

\begin{prop}\label{prop4.6}
For each $n\geqslant 2$, we have  $|\B_n|={n+1\choose 2}$.
\end{prop}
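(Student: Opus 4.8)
The plan is to proceed by induction on $n$, exploiting the recursive description of $\B_n$ as a union of two explicit families and showing that this union is disjoint while each family has the expected size. For the base case $n=2$ one simply observes that $f_{22},f_{21},f_{11}$ carry the pairwise distinct degree-two monomials $x_2^2$, $x_1x_2$, $x_1^2$, so they are distinct polynomials and $|\B_2|=3=\binom{3}{2}$.

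For the inductive step I would assume $|\B_{n-1}|=\binom{n}{2}$ and write $\B_n=T_1\cup T_2$, where $T_1=\{\,f-a^{-n}f(w_n)\,x_n\mid f\in\B_{n-1}\,\}$ and $T_2=\{\,x_nx_i-\binom{n}{i}a^i x_n\mid i=1,\dots,n\,\}$. The statement then reduces to three claims: that the shearing map $\Phi\colon f\mapsto f-a^{-n}f(w_n)x_n$ is injective on $\B_{n-1}$, so that $|T_1|=\binom{n}{2}$; that $|T_2|=n$; and that $T_1\cap T_2=\varnothing$. Granting these, $|\B_n|=\binom{n}{2}+n=\binom{n+1}{2}$, which is exactly the desired identity.

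The uniform device for all three claims is to read each polynomial inside $\A_{n-1}[x_n]$, i.e.\ as a polynomial in $x_n$ with coefficients in $\A_{n-1}=\F_q[x_1,\dots,x_{n-1}]$, and to compare the parts homogeneous in $x_n$. Injectivity of $\Phi$ is immediate: if $\Phi(f)=\Phi(g)$ then $f-g=a^{-n}(f-g)(w_n)\,x_n$, whose left-hand side lies in $\A_{n-1}$ and is free of $x_n$ while the right-hand side is a scalar multiple of $x_n$, so both sides vanish and $f=g$. That $|T_2|=n$ follows because the monomials $x_nx_1,\dots,x_nx_{n-1},x_n^2$ are pairwise distinct.

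The only step with genuine content is the disjointness, and it too is settled by the $x_n$-grading. Every element of $T_1$ has $x_n$-degree at most $1$, and its degree-one coefficient is the scalar $-a^{-n}f(w_n)\in\F_q$; consequently the coefficient of the monomial $x_ix_n$ in any member of $T_1$ is $0$ for every $i\in\{1,\dots,n\}$. By contrast the $i$-th member of $T_2$ contains $x_ix_n$ (read as $x_n^2$ when $i=n$) with coefficient $1$. Hence no element of $T_1$ can coincide with an element of $T_2$, the union is disjoint, and the count follows. I expect this coefficient comparison to be the crux; everything else is formal bookkeeping together with the arithmetic fact $\binom{n}{2}+n=\binom{n+1}{2}$.
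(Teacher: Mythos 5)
Your proof is correct and takes essentially the same route as the paper: induction on $n$ via the count $|\B_n|=|\B_{n-1}|+n$, with the whole argument resting on the observation that elements of $\B_{n-1}$ do not involve $x_n$. The paper treats the injectivity, distinctness, and disjointness claims as immediate from that observation, whereas you verify them explicitly by comparing coefficients in the $x_n$-grading; the substance is the same.
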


\begin{proof} 
We may assume that $n\geqslant 3$ as the case $n=2$ follows from Proposition \ref{prop4.5}.
Note that every $f\in\B_{n-1}$ does not involve  $x_n$. By the definition of $\B_n$, we see that $|\B_n|=|\B_{n-1}|+n$.
Since the induction hypothesis implies that $|\B_{n-1}|={n\choose 2}$, it follows that 
$|\B_n|={n\choose 2}+{n\choose 1}={n+1\choose 2}$.
\end{proof}

For example,
when $n=3$, we see that $w_3=(3a,3a^2)$, $|\B_3|=6$ and  
$$\B_3=\{f_{11},f_{21}-3x_3,f_{22}-6a\cdot x_3\}\cup\{x_3^2-a^3\cdot x_3, x_3x_2-3a^2\cdot x_3,x_3x_1-3a\cdot x_3\}.$$

\begin{thm}\label{thm4.7}
Let $n\geqslant 2$ and $\I_n$ be the ideal of $\A_n$ generated by $\B_n$. Then $V(\I_n)=\V_n$.
\end{thm}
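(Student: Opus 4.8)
The plan is to prove $V(\I_n)=\V_n$ by induction on $n$, with the base case $n=2$ supplied by Proposition \ref{prop4.5}. The recursive structure of $\B_n$ is engineered precisely to support such an induction: the generators split into two families, those inherited (and modified) from $\B_{n-1}$, and the $n$ new quadratic relations $x_n x_i - \binom{n}{i}a^i x_n$ for $i=1,\dots,n$. My strategy is to analyze any point $v=(c_1,\dots,c_n)\in V(\I_n)$ by first extracting information about the last coordinate $c_n$ from the new relations, then reducing to the $(n-1)$-variable situation to apply the inductive hypothesis.

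First I would establish $\V_n \subseteq V(\I_n)$ (the easy containment): one checks that each $v_j = \left(\binom{j}{1}a,\dots,\binom{j}{j}a^j,0,\dots,0\right)$ kills every generator in $\B_n$. For the new quadratic generators this is a direct binomial computation, and for the modified inherited generators $f - \frac{f(w_n)}{a^n}x_n$ one uses that the $n$-th coordinate of $v_j$ is $\binom{j}{n}a^n$, which is $0$ for $j<n$ and $a^n$ for $j=n$; the subtracted term is calibrated so the correction vanishes on the truncations $(c_1,\dots,c_{n-1})$ that already lie in $\V_{n-1}$. For the reverse containment $V(\I_n)\subseteq \V_n$, I would take $v=(c_1,\dots,c_n)\in V(\I_n)$ and split into two cases according to the value of $c_n$. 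Setting $i=n$ in the new relation $x_n x_n - \binom{n}{n}a^n x_n = x_n^2 - a^n x_n$ forces $c_n(c_n - a^n)=0$, so $c_n\in\{0,a^n\}$.

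In the case $c_n=0$, every new relation $x_n x_i - \binom{n}{i}a^i x_n$ vanishes automatically, and each modified generator $f-\frac{f(w_n)}{a^n}x_n$ reduces to $f$ evaluated at $(c_1,\dots,c_{n-1})$. Thus the truncated point $v'=(c_1,\dots,c_{n-1})$ satisfies all of $\B_{n-1}$, so by the inductive hypothesis $v'\in\V_{n-1}$, i.e. $v'=v_j'$ for some $0\le j\le n-1$; appending $c_n=0$ recovers $v_j\in\V_n$. In the case $c_n=a^n\neq 0$, the new relations $x_n x_i=\binom{n}{i}a^i x_n$ become $c_i = \binom{n}{i}a^i$ for every $i=1,\dots,n$ after dividing by $c_n$, which pins down $v$ uniquely as $v_n$; one then verifies $v_n$ indeed kills the inherited generators (this is already covered by the $\V_n\subseteq V(\I_n)$ direction). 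Combining the two cases gives $V(\I_n)\subseteq\V_n$, completing the induction.

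I expect the main obstacle to be the bookkeeping in the easy containment $\V_n\subseteq V(\I_n)$, specifically verifying that the modified inherited generators $f-\frac{f(w_n)}{a^n}x_n$ vanish on every $v_j$ with $j<n$. The subtlety is that the truncation of $v_j$ to its first $n-1$ coordinates is \emph{not} in general a point of $\V_{n-1}$ for the right reasons unless one tracks the binomial identities carefully; the constant $w_n=\left(\binom{n}{1}a,\dots,\binom{n}{n-1}a^{n-1}\right)$ is the truncation of $v_n$ itself, so $f(w_n)$ measures the failure of the $\B_{n-1}$-generator $f$ to vanish at the truncation of $v_n$, and the correction term $\frac{f(w_n)}{a^n}x_n$ is designed to cancel exactly this failure because the $n$-th coordinate of $v_n$ equals $a^n$. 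Making this cancellation transparent, and checking it does not disturb vanishing at the other $v_j$ (whose $n$-th coordinate is $0$), is the crux; everything else is routine binomial algebra and an application of the inductive hypothesis.
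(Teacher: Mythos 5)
Your proposal is correct and follows essentially the same route as the paper: induction on $n$ with Proposition \ref{prop4.5} as the base case, the containment $\V_n\subseteq V(\I_n)$ checked generator by generator using that $\wt{v_n}=w_n$ and that the last coordinate of $v_j$ vanishes for $j<n$, and the reverse containment via the case split $c_n\in\{0,a^n\}$ forced by the generator $x_n^2-a^n x_n$, with the case $c_n=0$ reduced to the inductive hypothesis. The only cosmetic difference is that the paper closes the reverse containment by counting ($|V(\I_n)|=|V(\I_{n-1})|+1=n+1$), whereas you identify each point of $V(\I_n)$ with some $v_j$ directly; indeed your handling of the case $c_n=a^n$ (dividing the relations $c_nc_i={n\choose i}a^ic_n$ by $c_n\neq 0$ to pin down $v=v_n$) makes explicit a step the paper leaves implicit.
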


\begin{proof}
We may assume that $n\geqslant 3$. Given a vector $v\in\F_q^n$, we denote by $\wt{v}$ the projection image of $v$ onto
$\F_q^{n-1}$ via removing the last component of $v$. We first show that each $v_i\in\V_n$ belongs to
$V(\I_n)$. Indeed, for $1\leqslant i\leqslant n$, we see that the valuation 
$(x_n\cdot x_{i}-{n\choose i}a^{i}\cdot x_n)\mid_{v_n}=a^n\cdot {n\choose i}a^{i}-{n\choose i}a^{i}\cdot a^n=0.$
Further, for $f\in\B_{n-1}$, note that $f$ does not involve $x_n$ and $\wt{v_n}=w_n$, thus
$(f-\frac{f(w_n)}{a^n}\cdot x_n)\mid_{v_n}=f(v_n)-f(w_n)=f(\wt{v_n})-f(w_n)=0.$
This shows that $v_n\in V(\I_n)$. Moreover, since the last components of $v_0,v_1,\dots,v_{n-1}$ are are zero and 
 the induction hypothesis implies that $\{\wt{v_i}\mid i=0,1,\dots,n-1\}\subseteq V(\I_{n-1})$, 
we deduce that the valuation of each $f\in\B_n$ at $v_i$ is equal to zero for  $i\in\{0,1,\dots,n-1\}$.
This proves that $\V_n\subseteq V(\I_n)$.

Conversely, since $|\V_n|=n+1$, it suffices to show that $|V(\I_n)|=n+1$.
Suppose  $v=(c_1,c_2,\dots,c_n)$ $\in V(\I_n)$ denotes an arbitrary element. 
Since $x_n^2-a^n\cdot x_n\in\B_n$, it follows that $c_n^2-a^n\cdot c_n=0$, which implies that $c_n$ must be in $\{0,a^n\}$. If $c_n=a^n$, then $v=v_n\in V(\I_n)$ is unique; and assume that $c_n=0$, then   
$v\in V(\I_n)$ if and only if $\wt{v}\in V(\I_{n-1})$. Thus $|V(\I_n)|=|V(\I_{n-1})|+1=n+1$, as desired. Here 
the last equation holds from the induction hypothesis that $|V(\I_{n-1})|=n$.
\end{proof}

\begin{bibdiv}
  \begin{biblist}
  
  \bib{BCP97}{article}{
   author={Bosma, Wieb},
   author={Cannon, John},
   author={Playoust, Catherine},
   title={The Magma algebra system. I. The user language},
   journal={J. Symbolic Comput.},
   volume={24},
   date={1997},
   number={3-4},
   pages={235--265},
   issn={0747-7171},
}
  
 \bib{Bro93}{book}{
   author={Brown, William C.},
   title={Matrices over commutative rings},
   series={Monographs and Textbooks in Pure and Applied Mathematics},
   volume={169},
   publisher={Marcel Dekker, Inc., New York},
   date={1993},
   pages={viii+281},
   isbn={0-8247-8755-2},
}

\bib{CCSW19}{article}{
   author={Campbell, H. E. A.},
   author={Chuai, J.},
   author={Shank, R. James},
   author={Wehlau, David L.},
   title={Representations of elementary abelian $p$-groups and finite
   subgroups of fields},
   journal={J. Pure Appl. Algebra},
   volume={223},
   date={2019},
   number={5},
   pages={2015--2035},
   issn={0022-4049},
}

\bib{CSW21}{article}{
   author={Chen, Yin},
   author={Shank, R. James},
   author={Wehlau, David L.},
   title={Modular invariants of finite gluing groups},
   journal={J. Algebra},
   volume={566},
   date={2021},
   pages={405--434},
   issn={0021-8693},
}

\bib{DK15}{book}{
   author={Derksen, Harm},
   author={Kemper, Gregor},
   title={Computational invariant theory},
   series={Encyclopaedia of Mathematical Sciences},
   volume={130},
   edition={Second enlarged edition},
   publisher={Springer, Heidelberg},
   date={2015},
   pages={xxii+366},
}

\bib{DD16}{article}{
   author={Dong, Qixiang},
   author={Ding, Jiu},
   title={Complete commuting solutions of the Yang-Baxter-like matrix
   equation for diagonalizable matrices},
   journal={Comput. Math. Appl.},
   volume={72},
   date={2016},
   number={1},
   pages={194--201},
   issn={0898-1221},
}

\bib{DDH18}{article}{
   author={Dong, Qixiang},
   author={Ding, Jiu},
   author={Huang, Qianglian},
   title={Commuting solutions of a quadratic matrix equation for nilpotent
   matrices},
   journal={Algebra Colloq.},
   volume={25},
   date={2018},
   number={1},
   pages={31--44},
   issn={1005-3867},
}

\bib{Hod57}{article}{
   author={Hodges, John H.},
   title={Some matrix equations over a finite field},
   journal={Ann. Mat. Pura Appl. (4)},
   volume={44},
   date={1957},
   pages={245--250},
   issn={0003-4622},
}

\bib{Hod58}{article}{
   author={Hodges, John H.},
   title={The matrix equation $X^{2}-I=0$ over a finite field},
   journal={Amer. Math. Monthly},
   volume={65},
   date={1958},
   pages={518--520},
   issn={0002-9890},
}

\bib{Hod64}{article}{
   author={Hodges, John H.},
   title={A bilinear matrix equation over a finite field},
   journal={Duke Math. J.},
   volume={31},
   date={1964},
   pages={661--666},
   issn={0012-7094},
}

\bib{Hou18}{book}{
   author={Hou, Xiang-dong},
   title={Lectures on finite fields},
   series={Graduate Studies in Mathematics},
   volume={190},
   publisher={American Mathematical Society, Providence, RI},
   date={2018},
   pages={x+229},
   isbn={978-1-4704-4289-7},
}

\bib{KLR20}{article}{
   author={Kemper, Gregor},
   author={Lopatin, Artem},
   author={Reimers, Fabian},
   title={Separating invariants over finite fields},
   journal={arXiv: 2011. 07408},
   date={2020},
}

\bib{Smi02}{article}{
   author={Smith, Larry},
   title={Invariants of $2\times 2$-matrices over finite fields},
   journal={Finite Fields Appl.},
   volume={8},
   date={2002},
   number={4},
   pages={504--510},
   issn={1071-5797},
}

  \end{biblist}
\end{bibdiv}
\raggedright
\end{document}